\documentclass[11pt,reqno]{amsart}

\usepackage{amsmath}
\usepackage{amssymb}
\usepackage{amsfonts}
\usepackage{setspace}
\usepackage{mathrsfs}
\usepackage{version}

\newtheorem{theorem}{Theorem}[section]
\newtheorem{lemma}[theorem]{Lemma}

\newtheorem{corollary}[theorem]{Corollary}

\theoremstyle{definition}

\newcommand\op{\operatorname}
\renewcommand\c{\mathcal}
\renewcommand{\leq}{\leqslant}
\renewcommand{\geq}{\geqslant}
\newcommand\Sym{\mathcal{S}}

\def\F{\mathbf{F}}

\def\Z{\mathbf{Z}}

\def\N{\mathbf{N}}

\begin{document}

\title{A note on nonabelian Freiman-Ruzsa}



\author{Sean Eberhard}
\address{Mathematical Institute\\University of Oxford\\UK}
\email{eberhard@maths.ox.ac.uk}


\begin{abstract}
Recently Breuillard and Tointon~\cite{BT} showed that one reasonable formulation of the polynomial Freiman-Ruzsa conjecture fails for nonabelian groups. We improve and simplify their construction.
\end{abstract}

\maketitle

\section{Introduction}

A subset $A$ of some ambient group $G$ is called a \emph{$K$-approximate group} if $A^2\subset XA$ for some set $X$ of size at most $K$. A Freiman-type structure theorem for approximate groups was proved by Breuillard, Green, and Tao~\cite{BGT}.

\begin{theorem}[Breuillard-Green-Tao~\cite{BGT}]\label{BGT}
Let $A$ be a $K$-approximate group. Then there is a coset nilprogression $P\subset A^{100}$ of rank and step $O_K(1)$ such that $A$ is covered by $O_K(1)$ translates of $P$.
\end{theorem}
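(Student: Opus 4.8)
The plan is to follow the ultraproduct route through Hilbert's fifth problem, essentially reconstructing the Breuillard--Green--Tao argument, since the statement is a qualitative structural claim and the $O_K(1)$ dependence is far more naturally obtained by a compactness/limiting argument than by an explicit quantitative induction. I would argue by contradiction: suppose the conclusion fails along a sequence of $K$-approximate groups $A_n\subset G_n$, and pass to a nonprincipal ultraproduct $A=\prod_{n\to\alpha}A_n$ sitting inside $G=\prod_{n\to\alpha}G_n$. Then $A$ is an ultra approximate group, and by {\L}o\'{s}'s theorem any internal, first-order-expressible conclusion holding for $A$ holds for almost every $A_n$; this is the mechanism that will let me push a single good structure back down at the very end.

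The heart of the matter is to attach to $A$ a \emph{locally compact model}. Following Hrushovski, I would equip $G$ with a Keisler/Loeb measure and run a stabiliser argument to produce a good model: an ``infinitesimal'' normal subgroup $H$ together with a continuous homomorphism from the local group generated by $A$ onto a locally compact group $L$, with precompact image. The purpose of this step is purely to convert a combinatorial object into a topological group on which Lie theory can act. I would then invoke the Gleason--Yamabe theorem: $L$ has an open subgroup $L'$ containing a compact normal subgroup $N$ with $L'/N$ a connected Lie group of dimension $O_K(1)$. Passing to $L'/N$, the image of $A$ becomes a bounded symmetric neighbourhood of the identity in a Lie group.

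Next I would extract nilpotency from the Lie model. Because $A$ is an approximate group, its image cannot spread out in any simple factor -- here one uses the structure theory of connected Lie groups together with escape-from-subgroups and commutator estimates (and the fact, in the style of the Lie-theoretic product theorems, that simple factors admit no large approximate subgroups) to force the relevant subgroup into the solvable radical and ultimately into a nilpotent Lie subgroup. Its dimension $r$ and nilpotency class $s$, both $O_K(1)$, will govern the rank and step of the eventual progression. Choosing a basis $X_1,\dots,X_r$ of the corresponding nilpotent Lie algebra and working in exponential coordinates, the Baker--Campbell--Hausdorff formula shows that a neighbourhood of the identity is parametrised exactly like a nilprogression of rank $r$ and step $s$.

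Finally I would build the genuine progression by pulling the one-parameter directions $\exp(tX_i)$ back through the model. An escape-norm argument identifies elements of $A$ whose bounded powers remain in $A$, supplying honest generators and lengths so that the resulting nilprogression lies in a bounded power $A^{O(1)}$; after fixing the absolute constants one can arrange the containment in $A^{100}$ while still covering $A$ by $O_K(1)$ translates. {\L}o\'{s}'s theorem then transfers this structure to almost every $A_n$, contradicting the failure assumption. I expect the main obstacle to be precisely the construction and control of the locally compact model and the application of Gleason--Yamabe: these supply the qualitative $O_K(1)$ bounds but are the least elementary ingredients, and making the escape/transfer quantitative enough to land the generators inside $A^{100}$, rather than a mere unspecified $A^{O(1)}$, requires the most care.
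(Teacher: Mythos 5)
This theorem is not proved in the paper under review: it is quoted from Breuillard--Green--Tao~\cite{BGT} as background, so there is no internal proof to compare against. Your outline is, in substance, the strategy of the original proof in~\cite{BGT} --- pass to an ultraproduct so that the $O_K(1)$ bounds follow from a single qualitative statement by {\L}o\'{s}/compactness, build a locally compact model in the style of Hrushovski, apply Gleason--Yamabe to reduce to a connected Lie group of bounded dimension, and use escape norms and one-parameter subgroups to manufacture a nilprogression that pulls back into a bounded power of $A$. So the approach is the right one, and it is the same one the cited source takes.

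That said, what you have written is a roadmap rather than a proof: every genuinely hard step is invoked as a black box, namely the construction of the locally compact model, the Gleason lemma machinery that produces an escape norm with good commutator behaviour, and the induction on the dimension of the Lie model that actually yields the nilprogression. These constitute essentially all of the content of the theorem. One point in your sketch is also a misdescription of the mechanism: nilpotency is not obtained by excluding large approximate subgroups of simple factors via product theorems. In~\cite{BGT} it comes from the commutator estimate $\|[g,h]\|\leq C\|g\|\,\|h\|$ for the escape norm, which lets one repeatedly extract an almost-central one-parameter direction and induct on dimension; the classification-flavoured product theorems for simple groups are not an ingredient. Finally, as you anticipate, landing $P$ inside a fixed power such as $A^{100}$ (rather than an unspecified $A^{O_K(1)}$) requires genuine bookkeeping through the local-group formalism. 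None of this makes the plan wrong, but as written the proposal defers the substance of the theorem to the very results it would need to establish.
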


The former $O_K(1)$ is benign: it can be taken to be $O(K^2\log K)$, or even $O(\log K)$ at the expense of replacing $A^{100}$ in the theorem by $A^{O_K(1)}$. No earthly bounds are known for the latter $O_K(1)$, however, nor for the $O_K(1)$ in the following weaker statement.

\begin{corollary}\label{BGTweak}
Every $K$-approximate group $A$ is contained in $O_K(1)$ cosets of a group $\Gamma$ having a normal subgroup $H\subset A^{100}$ such that $\Gamma/H$ is nilpotent.
\end{corollary}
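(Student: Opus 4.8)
The plan is to peel the algebraic content directly out of the coset nilprogression furnished by Theorem~\ref{BGT}, since a coset nilprogression already carries almost exactly the data the corollary asks for: a finite normal subgroup together with a nilpotent quotient.

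First I would apply Theorem~\ref{BGT} to $A$ to obtain a coset nilprogression $P\subset A^{100}$ of rank and step $O_K(1)$ such that $A$ is covered by $m=O_K(1)$ left translates $x_1P,\dots,x_mP$. By the definition of a coset nilprogression there is a finite subgroup $H$, normal in $\Gamma:=\langle P\rangle$, such that $P$ is a union of cosets of $H$ and the image $P/H$ is a genuine nilprogression in $\Gamma/H$. In particular the identity coset lies in $P/H$, so $H\subset P\subset A^{100}$, which hands us the required containment $H\subset A^{100}$ for free.

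Next I would check that $\Gamma/H$ is nilpotent of step $O_K(1)$. Since $H\subset P$ we have $H\subset\Gamma$, and $\Gamma/H=\langle P/H\rangle$ is the group generated by the nilprogression $P/H$; by the very definition of a nilprogression its generators $u_1,\dots,u_r$ generate a nilpotent group whose step equals the step of $P$, namely $O_K(1)$. Hence $\Gamma/H$ is nilpotent of bounded step, and $H$ is normal in $\Gamma$ by construction. Finally, because $P\subset\Gamma$ each translate satisfies $x_iP\subset x_i\Gamma$, so $A\subset\bigcup_{i=1}^m x_i\Gamma$ is covered by $m=O_K(1)$ cosets of $\Gamma$, as required.

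The argument is essentially a bookkeeping exercise, so I do not anticipate a genuine obstacle; the one point that needs care is the precise convention in the definition of a coset nilprogression, specifically verifying that the finite subgroup $H$ is normal in the whole ambient group $\langle P\rangle$ (and not merely normalized by the nilprogression generators) and that it actually sits inside $P$ rather than only inside $A^{100}$. Provided these conventions match those used in the statement of Theorem~\ref{BGT}, the corollary follows immediately by taking $\Gamma=\langle P\rangle$.
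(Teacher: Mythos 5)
Your argument is correct and is precisely the intended derivation: the paper states Corollary~\ref{BGTweak} without proof because it follows by exactly this unpacking of the coset nilprogression $P=\pi^{-1}(P_0)$ from Theorem~\ref{BGT}, taking $\Gamma=\langle P\rangle$ and $H$ the finite normal subgroup, with $H\subset P\subset A^{100}$ and $\Gamma/H$ generated by a nilprogression, hence nilpotent. The one convention you flag (normality of $H$ in $\langle P\rangle$ and $H\subset P$) does hold in the Breuillard--Green--Tao setup, so there is nothing further to check.
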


Recently Breuillard and Tointon~\cite{BT} gave an example which shows that the $O_K(1)$ here cannot be taken to be polynomial in $K$, even if we were to replace ``nilpotent'' with ``solvable'' and $A^{100}$ with, say, $A^{\exp\exp\exp K}$.

\begin{theorem}[Breuillard-Tointon~\cite{BT}]
Let $f:\N\to\N$ be arbitrary. Then for arbitrarily large $K$ there are finite $K$-approximate groups $A$ not covered by $K^{\frac1{200}\log\log\log\log K}$ cosets of any group $\Gamma$ having a finite normal subgroup $H\subset A^{f(K)}$ such that $\Gamma/H$ is solvable.
\end{theorem}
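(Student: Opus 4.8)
The plan is to manufacture $A$ from a tower of alternating groups, using the non-solvability of $\Alt(\ell)$ as the only engine of complexity, and using the sparsity of $A$ to prevent $A^{f(K)}$ from ever exhibiting that complexity. The guiding dichotomy is this: if $A\subset\bigcup_{j\le m}g_j\Gamma$ with $H\trianglelefteq\Gamma$, $\Gamma/H$ solvable and $H\subset A^{f(K)}$, then the perfect core satisfies $\Gamma^{(\infty)}\subset H\subset A^{f(K)}$. Thus the whole non-solvable part of $\Gamma$ is squeezed into the short, sparse set $A^{f(K)}$. If one can force $\Gamma^{(\infty)}$ to be ``genuinely non-solvable'' (to project onto a large alternating group somewhere) unless $m$ is large, one gets an element of $A^{f(K)}$ that is too long or too spread out to exist, and hence a lower bound on $m$.

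First I would work out the single-level prototype in $G=\Alt(\ell)^n$ with the $\ell^1$ word metric coming from generators of the factors, taking $A$ to consist of elements supported on at most $s$ coordinates, each non-trivial coordinate lying in a symmetric Cayley ball $Y\subset\Alt(\ell)$ with $|Y|\ge\tfrac12|\Alt(\ell)|$ (so that $Y$ is itself an $O(1)$-approximate group and $\pi_i(A)=Y$ for all $i$). Since $Y$ fills more than half of $\Alt(\ell)$, covering $\pi_i(A)=Y$ by $m$ cosets of $\Gamma_i:=\pi_i(\Gamma)$ forces $[\Alt(\ell):\Gamma_i]\le 2m$. Because $\Gamma^{(\infty)}$ is perfect it is a subdirect product of the simple factors it meets, so it actually contains an element non-trivial on its entire support; as this element lies in $A^{f(K)}$ its support has size at most $f(K)s$. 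Hence for the $\ge n-f(K)s$ coordinates $i$ outside that support, $\pi_i(\Gamma)^{(\infty)}=\pi_i(\Gamma^{(\infty)})=1$, i.e. $\Gamma_i$ is solvable; covering $Y$ by $m$ cosets of a solvable subgroup and invoking Dixon's bound $|\Gamma_i|\le 3^{\ell}$ gives $m\ge|Y|/3^{\ell}\gtrsim|\Alt(\ell)|/3^{\ell}$. (One also needs the elementary reduction to $\Gamma\le\langle A\rangle=G$, and the subdirect-product structure of perfect subgroups of $\Alt(\ell)^n$.)

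The catch is that this bound is only polynomial in $K$: the factor $Y$ has value-multiplicity $\approx|\Alt(\ell)|$ per active coordinate, so doubling costs $\log K\gtrsim s\log|\Alt(\ell)|$, while $\log m\approx\log|\Alt(\ell)|$, a ratio $\le 1$. This is a real phenomenon, not a defect of the argument: non-solvability forces fast growth, so the large diameter needed to keep $A^{f(K)}\ne\langle A\rangle$ fights directly against small doubling. To beat the polynomial barrier I would iterate, replacing the direct power by the iterated imprimitive wreath product $W_k=\Alt(\ell)\wr\cdots\wr\Alt(\ell)$ acting on the rooted $\ell$-ary tree of depth $k$, with $A$ the analogous low-complexity approximate group (boundedly many active tree-nodes, values in $Y$). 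The same perfect-core-versus-support dichotomy is then applied recursively down the tree: at the root one projects to the top $\Alt(\ell)$-quotient, and the budget $|\op{supp}\Gamma^{(\infty)}|\le f(K)s$ forces all but boundedly many nodes to have solvable section, each such node contributing a multiplicative factor $\approx|\Alt(\ell)|/3^{\ell}$ to the covering number, giving $m\gtrsim(|\Alt(\ell)|/3^{\ell})^{\Omega(k)}$ while the doubling constant grows only polynomially with $|\Alt(\ell)|$ and the tree size. Optimising the depth $k$ against $K$ — here $\log\log\log\log K\approx\log\log\log\ell$, so $k$ a small multiple of $\log\log\log\ell$ suffices — should produce $m\ge K^{\frac1{200}\log\log\log\log K}$, with $K\to\infty$ as $\ell\to\infty$.

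The hard part will be exactly this recursion. Unlike the top quotient, the ``section of $\Gamma$ at a deep node $v$'' is not a homomorphic image of $W_k$, so one cannot simply project; instead one must argue inside the successive subtree-stabilisers and show that solvable sections at distinct nodes consume distinct cosets, so that their contributions genuinely multiply rather than merely take a maximum. Making this multiplicativity precise — together with verifying that the sparse set $A$ really is a $K$-approximate group with $K$ small enough, controlling the interaction between the wreath action and the notion of support, and, for arbitrary $f$, arranging the tree to be so much larger than $f(K)$ that $A^{f(K)}$ still cannot realise a fully complex element (this is where the tension between small doubling, large diameter and non-solvability bites hardest) — is where essentially all the work lies. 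Everything else reduces to the elementary group theory of $\Alt(\ell)$: minimal degree $\ell$, Dixon's bound on solvable subgroups, and the structure of subdirect products of simple groups.
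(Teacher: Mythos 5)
Your route is genuinely different from the paper's, so let me first record what the paper actually does: it does not prove this statement directly but deduces it from the stronger Theorem~\ref{BT}, in which the hypothesis $H\subset A^{f(K)}$ is dropped entirely. The example there is $A=\Sym_n\ltimes[-R,R]^n$ inside the \emph{infinite} group $\Sym_n\ltimes\Z^n$: once $\Gamma$ is forced to have small index, $\Gamma\cap\Z^n$ contains $d\Z^n$, and Lemma~\ref{SymnZn} shows that no finite normal $H$ with $\Gamma/H$ solvable can exist at all. The function $f$ never enters. Your plan --- finite towers of alternating groups, with sparsity of $A$ used to trap the perfect core $\Gamma^{(\infty)}$ inside $A^{f(K)}$ --- is closer in spirit to the original Breuillard--Tointon argument, and the perfect-core dichotomy and the use of Dixon's bound are sound ingredients.

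However, there is a concrete gap that already breaks the single-level prototype, before one reaches the recursion you defer. If $A\subset\Alt(\ell)^n$ is the set of elements supported on at most $s$ coordinates with entries in $Y$, then $A^2$ contains every element of support exactly $2s$ with entries in $Y\setminus\{1\}$, so $|A^2|/|A|\gtrsim \binom{n}{2s}\binom{n}{s}^{-1}|Y|^{s}\geq (n/4s)\,|Y|$. Hence any $K$ for which $A$ is a $K$-approximate group satisfies $K\geq n$, so $f(K)\geq n$ already for $f(x)=x$; the support bound $|{\op{supp}}|\leq f(K)s$ is then vacuous, and in fact $A^{f(K)}=\langle A\rangle=\Alt(\ell)^n$, so taking $\Gamma=H=\langle A\rangle$ covers $A$ by a single coset. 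The same tension persists under the wreath-product iteration: the doubling constant of a bounded-support set grows at least linearly in the number of available positions, which is exactly the quantity you need to exceed $f(K)$, so the quantifier order ``for every $f$ there exist $K$ and $A$'' cannot be met this way. What is missing is a component in which $A$ has bounded doubling but \emph{unbounded diameter} --- long intervals in $\Z$ or in a huge cyclic group --- so that the reach of $A^{f(K)}$ can be limited after $K$ has already been fixed. That is precisely the role of $[-R,R]^n$ in the paper's construction, where $R$ is a free parameter while $K=2^n$ is not; any repair of your approach needs an analogous long, flat direction glued non-solvably to the alternating part.
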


Here we improve and simplify their construction.

\begin{theorem}\label{BT}
For arbitrarily large $K$ there are finite $K$-approximate groups $A$ not covered by $K^{\log\log K}$ cosets of any group $\Gamma$ having a finite normal subgroup $H$ such that $\Gamma/H$ is solvable.
\end{theorem}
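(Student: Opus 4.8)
\section*{Proof proposal}

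The plan is to realise $A$ inside an \emph{infinite} group, for the following reason: any finite ambient group is covered by a single coset of itself, and a finite group is trivially finite-by-solvable (take $H=\Gamma$), so the whole difficulty is to keep $\langle A\rangle$ infinite and far from finite-by-solvable while keeping the doubling $K$ small relative to $|A|$. I would fix a nonabelian finite simple group $S=\Alt_m$ and build a tower of restricted wreath products of height $h$ over an abelian base, say $G_0=\Z$ and $G_{i+1}=G_i\wr S$, so that $G:=G_h$ is finitely generated, infinite (it contains copies of $\Z$ at the leaves), and has $G$ modulo its abelian leaf subgroup equal to the $h$-fold iterated wreath product $W_h=S\wr\cdots\wr S$, which is perfect. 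The point of using $S$ rather than a solvable top group is that $W_h$ is non-solvable at $h$ genuinely independent ``simple levels'', and the point of the infinite abelian base is that $\langle A\rangle$ cannot itself serve as a one-coset cover.

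The approximate group $A=A_h\subset G$ I would define recursively, arranging that $A_{i+1}$ consists of a box of copies of $A_i$ together with the full finite simple top factor and a bounded translate in the base direction, so that one application of the wreath multiplication turns $A_i^2\subset X_iA_i$ into $A_{i+1}^2\subset X_{i+1}A_{i+1}$ with $|X_{i+1}|$ only a bounded power of $|X_i|$. The routine but essential calculation is to check that this keeps the doubling under control, giving $K=|X_h|\asymp\exp\exp(\Theta(h))$ and hence $h\asymp\log\log K$, while $|A|$ is enormous compared with $K$. I must be careful here that the support does not spread under multiplication in a way that blows $K$ up faster than doubly exponentially in $h$, since that is exactly what would destroy the bound.

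For the lower bound I would argue that any $\Gamma$ with finite normal $H$ and $\Gamma/H$ solvable covers only a small fraction of $A$ per coset. If $A\subset\bigcup_{i=1}^m g_i\Gamma$ with $m$ small, then a standard approximate-group manipulation shows $A^{O(1)}\cap\Gamma$ is large, so $\Gamma$ inherits the full tower structure at all but $\poly$-many of the $h$ simple levels; at each surviving level, solvability of $\Gamma/H$ together with the simplicity of $\Alt_m$ and a Dixon-type bound for solvable subgroups forces a deficit of index $m^{(1-o(1))m}$, and the finite subgroup $H$, being finite, can repair only boundedly many levels. Multiplying the surviving per-level deficits gives $|A\cap g\Gamma|\le|A|\,K^{-\log\log K}$ uniformly in $g$, so more than $K^{\log\log K}$ cosets are needed.

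The main obstacle, and the reason the exponent is $\log\log K$ rather than something larger, is the tension between the two halves of the argument: small doubling pushes $A$ towards the nilpotent (hence solvable) structure produced by Breuillard--Green--Tao, while the lower bound needs $A$ to be non-solvable at as many independent scales as possible. Packing $h$ simple levels costs a doubly-exponential-in-$h$ factor in the doubling but only a per-level factor in the coset count, and balancing these is what pins the exponent at $\log\log K$. The most delicate step is proving the simultaneous level-by-level thinness of an \emph{arbitrary} finite-by-solvable $\Gamma$ --- in particular ruling out a clever $\Gamma$ that trades solvability at one level for efficiency at another --- and it is here that the infiniteness of the base, which prevents the finite group $H$ from rescuing solvability across all $h$ levels at once, does the real work.
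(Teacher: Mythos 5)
Your construction is genuinely different from the paper's (which takes a single level, $G=\Sym_n\ltimes\Z^n$ and $A=\Sym_n\ltimes[-R,R]^n$ with $R$ huge), and the heart of the problem is that your numerology cannot deliver the exponent $\log\log K$. With a \emph{fixed} simple group $S=\Alt_m$ and $h$ wreath levels you correctly get $K\asymp\exp\exp(\Theta(h))$, but then the per-level deficit $m^{(1-o(1))m}$ is a constant, and multiplying it over $h$ levels yields only $\exp(O(h))=(\log K)^{O(1)}$ cosets --- exponentially short of $K^{\log\log K}=\exp\bigl(\Theta(h)e^{\Theta(h)}\bigr)$. The $\log\log K$ in the theorem does not come from balancing many independent simple levels against doubly-exponential doubling; it comes from a \emph{single} level in which the degree of the symmetric group tends to infinity: for $K=2^n$ one has $n!\approx K^{\Theta(\log\log K)}$ while Dixon's bound caps solvable subgroups of $\Sym_n$ at $(2\cdot3^{1/3})^{n-1}=K^{O(1)}$, and that gap of $n^{(1-o(1))n}$ is the entire source of the exponent. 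Stacking levels of bounded degree only multiplies constants.

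Two further steps are asserted without a mechanism. First, the claim that the finite normal subgroup $H$ ``can repair only boundedly many levels'' is unjustified: finite does not mean bounded, and in your tower all $h$ simple levels together form a finite group, so nothing a priori stops $H$ from swallowing every level. The correct argument (the paper's Lemma~\ref{SymnZn}) is that normality forces $H\supset[H\cap G_{\mathrm{top}},\,\text{base}]$, and since the base is infinite, torsion-free, and acted on faithfully, a \emph{finite} $H$ must miss the top entirely --- this is the one place the infinite base is used, and it kills all levels at once rather than ``boundedly many''. Second, the uniform per-coset bound $|A\cap g\Gamma|\le|A|K^{-\log\log K}$ is exactly the statement to be proved, and ``$\Gamma$ inherits the tower structure at all but polynomially many levels'' is not an argument. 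The paper sidesteps pointwise counting altogether: if $A=A_R$ is covered by $M<R$ cosets of $\Gamma$, then the chain $A_0\Gamma\subset A_1\Gamma\subset\cdots$ must stabilize before step $R$, forcing $A\Gamma=G$ and hence $[G:\Gamma]\le M$; then $\Gamma\cap\Sym_n$ has index at most $M$ in $\Sym_n$, Dixon's theorem makes it non-solvable, and Lemma~\ref{SymnZn} finishes. I would redirect your effort to that stabilization trick rather than trying to prove level-by-level thinness of an arbitrary finite-by-solvable $\Gamma$.
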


\section{Finite-by-solvable groups}

If $\c P$ and $\c Q$ are adjectives pertaining to groups (e.g., finite, nilpotent, or solvable), then a group $G$ is called $\c P$-by-$\c Q$ if it fits into an exact sequence $$1\to P \to G \to Q \to 1$$ in which $P$ is a $\c P$ group and $Q$ is a $\c Q$ group. (Some authors would call such a group $\c Q$-by-$\c P$. Perhaps they have the logical right of it, but we'll stick to the more common convention.)

Thus a group $\Gamma$ as in Theorem~\ref{BT} having a finite normal subgroup $H$ such that $\Gamma/H$ is solvable is called finite-by-solvable. Every finite-by-solvable group is solvable-by-finite, but the converse does not hold. For example consider the group $G = \Sym_n \ltimes \Z^n,$ where $\Sym_n$ acts on $\Z^n$ by permuting coordinates. $G$ is obviously solvable-by-finite, even abelian-by-finite, but the following lemma shows that $G$ is not finite-by-solvable.

\begin{lemma}\label{SymnZn}
If $G\leq\Sym_n$ is not solvable then $G\ltimes \Z^n$ is not finite-by-solvable.
\end{lemma}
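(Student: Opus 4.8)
The plan is to argue by contradiction. Write $\Gamma = G\ltimes\Z^n$ and suppose it is finite-by-solvable, so that there is a finite normal subgroup $H\trianglelefteq\Gamma$ with $\Gamma/H$ solvable. The crux is to show that $\Gamma$ has \emph{no} nontrivial finite normal subgroup at all. Granting this, $H$ must be trivial, so $\Gamma=\Gamma/H$ is itself solvable; but then its quotient $\Gamma/\Z^n\cong G$ is solvable, contradicting the hypothesis that $G$ is not solvable. Thus the whole lemma reduces to the single claim that finite normal subgroups of $\Gamma$ are trivial.

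To establish that claim I would exploit that the normal subgroup $\Z^n$ is torsion-free. Writing elements of $\Gamma$ as pairs $(g,v)$ with $g\in G\le\Sym_n$ and $v\in\Z^n$, conjugation by a translation yields the identity $(1,w)(g,v)(1,w)^{-1}=(g,\,v+(1-g)w)$, where $1-g$ denotes the endomorphism $w\mapsto w-gw$ of $\Z^n$. Now suppose $(g,v)\in H$. Normality forces every such conjugate to lie in $H$, so as $w$ ranges over $\Z^n$ the elements $(g,v+(1-g)w)$ take only finitely many values; hence the subgroup $(1-g)\Z^n\le\Z^n$ is finite, and therefore trivial since $\Z^n$ is torsion-free. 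Thus $g$ fixes $\Z^n$ pointwise, which means $g=1$ in $\Sym_n$. Consequently $H\subseteq\{1\}\times\Z^n$, and being a finite subgroup of the torsion-free group $\Z^n$ it is trivial, as required.

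I expect the only genuine content to be the conjugation identity together with the deduction that $(1-g)\Z^n$ is trivial; the reduction and the final contradiction are purely formal (quotients of solvable groups are solvable). The one routine point worth noting is that a permutation acting trivially on all of $\Z^n$ is the identity, which is immediate since it fixes each standard basis vector. No step looks like a serious obstacle, so the difficulty is really in isolating the right claim—that $\Gamma$ has no nontrivial finite normal subgroup—rather than in any single computation.
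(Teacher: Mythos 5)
Your proof is correct. It is built on the same engine as the paper's --- conjugating an element $(g,v)$ by translations $(1,w)$ and using that the coordinate-permutation action of $\Sym_n$ on $\Z^n$ is faithful and $\Z^n$ is torsion-free --- but the logical organization is genuinely different. The paper takes an arbitrary normal $H$ with solvable quotient, uses non-solvability of $G$ immediately to deduce $H\cap G\neq 1$, and then shows $H\supset[H\cap G,\Z^n]$ is infinite. You instead isolate the stronger structural claim that $G\ltimes\Z^n$ has \emph{no} nontrivial finite normal subgroup at all (true for any $G\leq\Sym_n$, solvable or not): the finiteness of the conjugacy class of $(g,v)$ forces the subgroup $(1-g)\Z^n$ to be finite, hence trivial, hence $g=1$; then $H$ sits inside the torsion-free group $\Z^n$ and must be trivial. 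Non-solvability of $G$ enters only at the very end, via the quotient $\Gamma/\Z^n\cong G$. Your version is slightly longer but more modular, and the intermediate claim (trivial finite radical of $G\ltimes\Z^n$) is a clean statement of independent interest; the paper's version is marginally more economical. All the individual steps you flag as routine (the conjugation identity, quotients of solvable groups being solvable, a permutation fixing every basis vector being the identity) are indeed fine.
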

\begin{proof}
Let $H$ be a normal subgroup of $G\ltimes\Z^n$ such that $(G\ltimes\Z^n)/H$ is solvable. Since $G/(H\cap G) \cong (GH)/H \leq (G\ltimes\Z^n)/H$ we must have ${G/(H\cap G)}$ solvable, and in particular $H\cap G\neq 1$. Moreover since $H$ is normal we must have $H\supset[H\cap G,\Z^n]$. But since $H\cap G\neq 1$ and the action of $\Sym_n$ on $\Z^n$ is faithful the subgroup $[H\cap G,\Z^n]$ of $\Z^n$ is nontrivial and therefore infinite, so $H$ must be infinite.
\end{proof}

We can now briefly sketch the proof of Theorem~\ref{BT}. We will take $G = \Sym_n\ltimes\Z^n$ and $A = \Sym_n\ltimes[-R,R]^n$, where $R\approx\infty$. Then $A$ is a $2^n$-approximate group. We will show that any $\Gamma\leq G$ which covers $A$ with only $2^{n\log n}$ cosets must cover so much of $\Sym_n$ that $\Gamma\cap\Sym_n$ is not solvable. $\Gamma$ will also have to cover a finite-index subgroup of $\Z^n$, so from Lemma~\ref{SymnZn} it will follow that $\Gamma$ is not finite-by-solvable.

\section{Solvable subgroups of the symmetric group}

The following theorem about solvable subgroups of symmetric group was proved by Dixon~\cite{dixon1}.

\begin{theorem}\label{solvable}
If $G\leq\Sym_n$ is solvable then $|G|\leq (2\cdot 3^{1/3})^{n-1}$.
\end{theorem}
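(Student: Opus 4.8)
The plan is to induct on $n$, writing $c=2\cdot 3^{1/3}=24^{1/3}$ so that the claimed bound reads $|G|\leq c^{n-1}$. The base cases $n\leq 4$ are checked by hand: the only nontrivial point is $n=4$, where $\Sym_4$ is solvable of order $24=c^3$, meeting the bound with equality. For the inductive step I would split into three cases according to whether $G$ is intransitive, transitive but imprimitive, or primitive, and reduce the first two to smaller instances of the theorem.

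Suppose first $G$ is intransitive, with orbits of sizes $n_1,\dots,n_k$ where $k\geq 2$ and $\sum_i n_i=n$. Restricting to each orbit exhibits $G$ as a subgroup of $\prod_i G_i$, where $G_i\leq\Sym_{n_i}$ is the (solvable) image of the action on the $i$th orbit, so by induction $|G|\leq\prod_i c^{n_i-1}=c^{n-k}\leq c^{n-1}$. Next suppose $G$ is transitive with a nontrivial block system consisting of $m$ blocks of size $n/m$, with $1<m<n$. Then $G$ embeds into the wreath product $H\wr K$, where $H\leq\Sym_{n/m}$ records the action on a block and $K\leq\Sym_m$ the action on the set of blocks; both are solvable, so $|G|\leq|H|^m|K|\leq c^{(n/m-1)m}c^{m-1}=c^{n-1}$ by induction. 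This is exactly the case where equality can propagate, matching the extremal iterated wreath products of $\Sym_4$ (for instance $\Sym_4\wr\Sym_4$ on $16$ points has order $24^5=c^{15}$).

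The substantive case is when $G$ is primitive. Here I would invoke the classical structure theorem for primitive solvable permutation groups (Galois; see Dixon--Mortimer): $G$ is affine, so $n=p^d$ for a prime $p$, the socle $V\cong\F_p^{\,d}$ is a regular elementary abelian normal subgroup, and the point stabiliser $G_0$ is an irreducible solvable subgroup of $\op{GL}_d(p)$ with $G=V\rtimes G_0$ and $|G|=p^d|G_0|$. It then suffices to prove $p^d|G_0|\leq c^{p^d-1}$. Rather than appealing to the finer structure of solvable linear groups, I would use only the crude bound $|G_0|\leq|\op{GL}_d(p)|<p^{d^2}$, reducing the claim to the numerical inequality $(d^2+d)\log p\leq\tfrac13(p^d-1)\log 24$. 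A short monotonicity analysis (the right side grows like $p^d$ while the left side grows only polynomially in $d$ and logarithmically in $p$) shows this holds for all pairs $(p,d)$ outside the finite list $(p,d)\in\{(2,1),(3,1),(2,2),(2,3)\}$, that is $n\in\{2,3,4,8\}$; of these $n\in\{2,3,4\}$ are base cases and $n=8$ is settled directly from $|\op{GL}_3(2)|=168$, since $8\cdot 168<c^{7}=24^{7/3}$.

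I expect the primitive case to be the only real obstacle, and within it the one genuinely external ingredient is the structure theorem identifying primitive solvable groups as affine groups with point stabiliser in $\op{GL}_d(p)$; everything else is bookkeeping together with the elementary estimate above. The appealing feature of this route is that the deep theory of solvable linear groups (extraspecial normal subgroups, symplectic quotients, and the like) is entirely avoided: the wreath-product recursion carries all the extremal weight, while in the primitive case the trivial bound $|G_0|<p^{d^2}$ already beats the far faster growing target $c^{p^d-1}$ outside a handful of small $n$.
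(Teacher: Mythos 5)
Your proposal is correct and follows essentially the same route as the paper: induction on $n$ with the intransitive/imprimitive/primitive trichotomy, the wreath-product recursion carrying the extremal case, and the primitive case reduced to the affine bound $|G|\leq p^d|\op{GL}_d(p)|$. The only differences are cosmetic: the paper derives the affine structure of primitive solvable groups on the spot via a minimal normal subgroup (rather than citing it), and it keeps the exact order of $\op{GL}_d(p)$, which makes the final inequality hold for all $(p,d)$ with no exceptional cases, whereas your cruder bound $|\op{GL}_d(p)|<p^{d^2}$ forces you to check $n\in\{2,3,4,8\}$ separately (which you do correctly).
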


For comparison, Dixon also proved that if $G\leq\Sym_n$ is abelian then $|G|\leq (3^{1/3})^n$, while if $G\leq\Sym_n$ is nilpotent then $|G|\leq 2^{n-1}$: see Dixon~\cite{dixon1,dixon2}. Each of these theorems is sharp up to a small multiplicative factor.

We reproduce the proof of Theorem~\ref{solvable} here just to keep this note reasonably self-contained.

\begin{proof}
Let $a=2\cdot 3^{1/3}$. We will prove $|G|\leq a^{n-1}$ by induction on $n$.

If $G$ is intransitive, say with orbit sizes $n_1,\dots,n_m$ for some $m>1$, then $G\leq G_1\times\cdots\times G_m$, where $G_i\leq\Sym_{n_i}$ is the restriction of $G$ to the $i$th orbit. Since each restriction $G_i$ is solvable, by induction $$|G|\leq a^{n_1 - 1}\cdots a^{n_m-1} \leq a^{n-1}.$$

Similarly if $G$ is transitive but imprimitive, say preserving a system of $r$ blocks with $1<r<n$, then the subgroup $H\leq G$ fixing each block is a solvable subgroup of $\Sym_{n/r}^r$, and $G/H$ is a solvable subgroup of $\Sym_r$, so by induction $$|G|=|G/H|\,|H|\leq a^{r-1} (a^{n/r-1})^r = a^{n-1}.$$

Hence we may assume $G$ is primitive. Let $A\leq G$ be a minimal normal subgroup. Then $A$ is characteristically simple (has no proper nontrivial characteristic subgroups), and any characteristically simple finite group is a direct product of isomorphic simple groups. Since $G$ is solvable this implies that $A\cong\F_p^k$ for some prime $p$ and some $k\geq 1$. Let $G_1$ be the stabilizer of a point in $G$. Since $G$ is primitive $G_1$ is maximal. Since $G$ is transitive $G_1$ contains no nontrivial normal subgroup of $G$, so $G=G_1A$. The centralizer $C(A)$ of $A$ in $G$ is normal in $G$, so $C(A)\cap G_1$ is normalised by both $A$ and $G_1$, hence by $G$. Thus $C(A)\cap G_1 = 1$. Thus $C(A)=A$ and $n = {[G:G_1]} = |A| = p^k$. It now follows that $G_1\cong G/C(A)$, which is isomorphic to a subgroup of $\op{Aut}(A) \cong \op{GL}_k(\F_p)$, so $$|G| \leq p^k|\op{GL}_k(\F_p)| = p^k(p^k-1)(p^k-p)\cdots(p^k-p^{k-1}).$$ Now check this is at most $a^{p^k-1}$ for all $p,k$.
\end{proof}

\section{Main argument}

Let $n\geq 100$, and let $G = \Sym_n \ltimes \Z^n$, where $\Sym_n$ acts on $\Z^n$ by permuting coordinates. For $r\geq 0$ let $A_r$ be the ball $$A_r = \Sym_n \ltimes [-r,r]^n.$$ Since $A_r^2 \subset A_{2r}$, $A_r$ is a $2^n$-approximate group for every $r$. Fix $R>2^{n\log n}$ and let $A=A_R$.

Suppose $A$ is contained in $2^{n\log n}$ cosets of $\Gamma\leq G$, i.e., $|A\Gamma/\Gamma|\leq 2^{n\log n}$. Then since $2^{n\log n} < R$ there is some integer $r\leq R$ such that $|A_{r+1}\Gamma/\Gamma|=|A_r\Gamma/\Gamma|$. Since $A_{s+1}=A_1A_s$ for all $s\geq 0$ this implies that $A_s\Gamma=A_r\Gamma$ for all $s\geq r$, and hence $A_r\Gamma=G$ and $A\Gamma=G$. Thus $\Gamma$ has index at most $2^{n\log n}$ in $G$.

In particular $\Gamma\cap\Sym_n$ has index at most $2^{n\log n}$ in $\Sym_n$, so by Theorem~\ref{solvable} and the calculation
\[
  2^{n\log n}(2\cdot 3^{1/3})^n < n!
\]
we know that $\Gamma\cap\Sym_n$ is not solvable. Also $\Gamma\cap\Z^n$ is a finite-index subgroup of $\Z^n$, so $\Gamma\cap\Z^n \geq d\Z^n$ for some integer $d>0$. Thus $\Gamma$ contains
\[
  (\Gamma\cap\Sym_n)\ltimes d\Z^n \cong (\Gamma\cap\Sym_n) \ltimes \Z^n,
\]
so by Lemma~\ref{SymnZn} the group $\Gamma$ cannot be finite-by-solvable.

\bibliography{nonabelianPFR}
\bibliographystyle{alpha} 
\end{document}